\newlength{\CutOffSpace}
\title{Coxeter elements of the symmetric groups whose powers afford the longest}
\author{Masashi KOSUDA} 
\theoremstyle{plain}
\newtheorem{thm}{Theorm}[section]
\newtheorem{cor}[thm]{Corollary}
\newtheorem{prop}[thm]{Proposition}
\newtheorem{lem}[thm]{Lemma}
\theoremstyle{definition}
\newtheorem{defn}[thm]{\sc Definition}
\newtheorem{rem}[thm]{\sc Remark}
\begin{document}
\maketitle

\baselineskip=1.0\baselineskip

\section*{Introduction}
It is well known that the symmetric group $\mathfrak{S}_{n}$
is defined by the generators $S = \{s_i\}_{i=1}^{n-1}$
of transpositions.
Consider a product of the distinct $n-1$ generators in any order
$s_{i_1}s_{i_2}\cdots s_{i_{n-1}}$.
Such an element is called a {\em Coxeter element}.
All Coxeter elements are conjugate to each other and
have the same cycle type $(n)$,
one cycle of length $n$ (Remark~\ref{rem:cyclic}),
and accordingly $(s_{i_1}s_{i_2}\cdots s_{i_{n-1}})^n = 1$.

Now suppose that $n$ is even
and consider the power of a Coxeter element
$(s_{i_1}s_{i_2}\cdots s_{i_{n-1}})^{n/2}$.
If this is reduced,
then it is the longest,
since the longest word of $\mathfrak{S}_{n}$ is the unique
element of length $n(n-1)/2$.
A natural question arise:
Does $(s_{i_1}s_{i_2}\cdots s_{i_{n-1}})^{n/2}$
afford the longest word in $\mathfrak{S}_{n}$
for any permutation of the generators?
Actually, this does not hold.
For example, $(s_1s_3s_2)^2$ affords the longest in $\mathfrak{S}_4$
while $(s_1s_2s_3)^2$ does not.

In this article, we first show that in case $n$ is even
which Coxeter element in $\mathfrak{S}_{n}$
affords the longest by taking its power to $n/2$.

Then we also consider the case where $n$ is odd, say $n=2m-1$.
In this case we cannot define
$(s_{i_1}s_{i_2}\cdots s_{i_{2m-2}})^{n/2}$.
Instead, we consider the following word
\[
	w_2(w_1w_2)^{m-1},
\]
where $w_1 = s_{i_1}s_{i_2}\cdots s_{i_{m-1}}$
and $w_2 = s_{i_m}s_{i_{m+1}}\cdots s_{i_{2m-2}}$.
For some Coxeter elements,
this expression also affords the longest word in $\mathfrak{S}_{2m-1}$.
For example, a Coxeter element $s_1s_3s_2s_4$ affords the longest word
$s_2s_4(s_1s_3s_2s_4)^2$ in $\mathfrak{S}_5$,
while $s_3s_4(s_1s_2s_3s_4)^2$ does not.
We also show that in case $n$ is odd
which Coxeter element
affords the longest in $\mathfrak{S}_{n}$.

\section{Preliminaries}
The symmetric group $\mathfrak{S}_{n}$
is a Coxeter system of type $A_{n-1}$~\cite{Bou,H},
which is defined by the generators:
\begin{equation*}
	S = \{s_1, s_2, \ldots, s_{n-1}\}
\end{equation*}
and the relations:
\begin{eqnarray*}
	s_i^2 = 1& &(1\leq i\leq n-1),\\
	s_is_{i+1}s_i = s_{i+1}s_is_{i+1}& &(1\leq i\leq n-2),\\
	s_is_j = s_js_i & &(1\leq i, j\leq n-1,\ |i-j|\geq 2).
\end{eqnarray*}
Each $w\in\mathfrak{S}_{n}$ can be written in the form
$w = s_{i_1}s_{i_2}\cdots s_{i_r}$ (not necessarily distinct)
for $s_{i_j}\in S$ ($j = 1, 2, \ldots, r$).
If $r$ is as small as possible,
then call it the {\em length} of $w$ and written $\ell(w)$,
and call any expression of $w$ as a product of $r$ elements of $S$
a {\em reduced expression}.
There may be some reduced expressions for an element $w\in\mathfrak{S}_{n}$.

Throughout this paper,
we will describe elements of $\mathfrak{S}_{n}$
drawing the following pictures called the Amida diagrams.
\begin{eqnarray}\label{xy:Amida3231}
\begin{xy}
(0,-5)="B1",(0,5)="T1",
(5,-5)="B2",(5,5)="T2",
(10,-5)="B3",(10,5)="T3",
(15,-5)="B4",(15,5)="T4",
{\ar @{-}"B1";"T1"},
{\ar @{-}"B2";"T2"},
{\ar @{-}"B3";"T3"},
{\ar @{-}"B4";"T4"},
{\ar @{-}(0,-3);(5,-3)},
{\ar @{-}(5,1);(10,1)},
{\ar @{-}(10,-1);(15,-1)},
{\ar @{-}(10,3);(15,3)},
\end{xy}\quad .
\end{eqnarray}

An {\em Amida diagram}
consists of $n$ vertical lines and horizontal segments
placed between the adjacent vertical lines like ladders
so that the end points of each horizontal segment meet the vertical lines
and that they do not meet any other horizontal segments' end points.

The $n$ runners who start from the bottoms of the vertical lines
go up along the lines.
If they find horizontal segments on their right [resp.~left],
they turn right [resp.~left] and go along the segments.
They necessarily meet the adjacent vertical lines.
Then again go up the vertical lines and iterate this trip
until they arrive at the tops of the vertical lines.
If the $i$-th  runner arrives at
the $\sigma_i$-th position $(i=1,2,\ldots,n)$,
we consider the Amida diagram is one of the expressions of
\begin{eqnarray*}
	\sigma =\left(
		\begin{array}{cccccc}
		1&2&\cdots&i&\cdots&n\\
		\sigma_1&\sigma_2&\cdots&\sigma_i&\cdots&\sigma_{n}
		\end{array}
	\right)\in\mathfrak{S}_{n}.
\end{eqnarray*}
For example the Amida diagram as in \eqref{xy:Amida3231} corresponds to
$\displaystyle{\left(
	\begin{array}{cccc}1&2&3&4\\4&1&3&2\end{array}
\right)}$.

We can also consider the product of Amida diagrams.
If $D_1$ and $D_2$ are Amida diagrams of $\mathfrak{S}_{n}$
then the product $D_1D_2$
is defined to be an Amida diagram obtained from
$D_1$ and $D_2$ by putting the former on the latter.

A generator $s_i\in\mathfrak{S}_{n}$ corresponds to
an Amida diagram which consists of $n$ vertical lines
with only one horizontal segment between the $i$-th and the $(i+1)$-st
vertical lines.
\begin{equation}\label{xy:si}
\begin{xy}
(0,-5)="B1",(0,5)="T1",
(15,-5)="B2",(15,5)="T2",
(20,-5)="B3",(20,5)="T3",
(25,-5)="B4",(25,5)="T4",
(30,-5)="B5",(30,5)="T5",
(45,-5)="B6",(45,5)="T6",
{\ar @{}(1,-7);(1,7)^>1^<1},
{\ar @{}(21,-7);(21,7)^>i^<i},
{\ar @{}(29,-7);(29,7)^>{i+1}^<{i+1}},
{\ar @{}(48,-7);(48,7)^>{n}^<{n}},
{\ar @{-}"B1";"T1"},
{\ar @{-}"B2";"T2"},
{\ar @{-}"B3";"T3"},
{\ar @{-}"B4";"T4"},
{\ar @{-}"B5";"T5"},
{\ar @{-}"B6";"T6"},
{(5, 0) \ar @{{.}{.}}(10, 0)},
{(35, 0) \ar @{{.}{.}}(40, 0)},
{\ar @{-}(20,0);(25,0)},
\end{xy}\quad .
\end{equation}
Since $s_i$s are the generators,
any word in $\mathfrak{S}_{n}$
can be expressed as an Amida diagram.
For example \eqref{xy:Amida3231} denotes $s_3s_2s_3s_1$.

A {\em Coxeter element} in $\mathfrak{S}_{n}$
is a product of distinct $n-1$ generators
$\{s_1, \ldots, s_{n-1}\}$ in any order.
By the definition we have $(n-1)!$ expressions of length $n-1$
for all Coxeter elements.
However it may happen that
different permutations of the distinct $n-1$ generators
cause the same Coxeter element.
For example, expressions $s_1s_3s_2s_4$
and $s_3s_1s_4s_2$ are the same element.

We want to count all distinct Coxeter elements.
Amida diagrams give us a convenient tool for doing it.
In order to express a Coxeter element by an Amida diagram,
we have only to place the $i$-th horizontal segment
(which corresponds to $s_i$)
between the $i$-th and the $(i+1)$-st vertical lines:
Place the first segment between the 1-st and the 2-nd vertical lines.
The second segment is placed between the 2-nd and the 3-rd vertital lines
so that it does not be placed on the same height as the 1-st one's.
Iterate this procedure until $(n-1)$-st segment is placed.
We call the Amida diagrams obtained in this way
{\em standard}.
The following is a standard Amida diagram for a Coxeter element
$s_1s_2s_4s_3s_5$ in $\mathfrak{S}_{6}$.
\begin{equation}
\begin{xy}
(0,-5)="B1",(0,5)="T1",
(5,-5)="B2",(5,5)="T2",
(10,-5)="B3",(10,5)="T3",
(15,-5)="B4",(15,5)="T4",
(20,-5)="B5",(20,5)="T5",
(25,-5)="B6",(25,5)="T6",
{\ar @{-}"B1";"T1"},
{\ar @{-}"B2";"T2"},
{\ar @{-}"B3";"T3"},
{\ar @{-}"B4";"T4"},
{\ar @{-}"B5";"T5"},
{\ar @{-}"B6";"T6"},
{\ar @{-}(0,2);(5,2)},
{\ar @{-}(5,0);(10,0)},
{\ar @{-}(10,-2);(15,-2)},
{\ar @{-}(15,0);(20,0)},
{\ar @{-}(20,-2);(25,-2)},
\end{xy}
\end{equation}

Since the $i$-th segment is not placed on the same height
as the $(i-1)$-st one's,
the former one must be placed higher or lower than the latter one's.
For the fixed $n$ vertical lines,
the standard Amida diagram for an expression of
a Coxeter element is uniquely defined up to graph isotopy.
This graph isotopy also compatible with
the commutativity among non-adjacent generators in $S$
of $\mathfrak{S}_n$.
 
We label  the $i$-th vertical line with $+$ or $-$ sign
($i=2,3,\ldots n-1$),
according to the positions of the $(i-1)$-st and the $i$-th
horizontal segments.
If the $i$-th horizontal segment is
placed higher [resp.~lower] than the $(i-1)$-st one's
we label  the $i$-th vertical line with $+$ [resp.~$-$] sign.
Then we have a sequence $[\epsilon_2, \epsilon_3, \ldots, \epsilon_{n-1}]$
of $+$ and $-$ signs of length $n-2$.
Conversely,
from a sequence of $+$ and $-$ signs of length $n-2$,
we can obtain the corresponding Coxeter element:
if $\epsilon_2$ is positive [resp.~negative]
then multiply $s_1$ by $s_2$ from the left [resp.~right].
if $\epsilon_3$ is positive [resp.~negative]
then multiply the previous one by $s_3$ from the left [resp.~right]
and repeat these multiplications until $s_{n-1}$ is multiplied.
Thus we can expect that the  following theorem holds.
\begin{thm}
There are $2^{n-2}$ Coxeter elements in $\mathfrak{S}_{n}$.
\end{thm}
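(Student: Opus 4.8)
The recipe stated just above the theorem assigns to each sign sequence $\epsilon=[\epsilon_2,\ldots,\epsilon_{n-1}]\in\{+,-\}^{n-2}$ an element $\Phi(\epsilon)$ of $\mathfrak{S}_{n}$, defined recursively by $c_1=s_1$, $c_i=s_ic_{i-1}$ if $\epsilon_i=+$ and $c_i=c_{i-1}s_i$ if $\epsilon_i=-$, with $\Phi(\epsilon):=c_{n-1}$. Each $c_i$ involves every one of $s_1,\ldots,s_i$ exactly once, so $\Phi(\epsilon)$ is a Coxeter element and $\Phi$ is a well-defined map into the set of Coxeter elements of $\mathfrak{S}_{n}$. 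The plan is to prove that $\Phi$ is a bijection; since $\lvert\{+,-\}^{n-2}\rvert=2^{n-2}$ this yields the theorem. Surjectivity is essentially already contained in the discussion of standard Amida diagrams: an arbitrary Coxeter element has some expression as a product of distinct generators, its standard Amida diagram is well defined up to the isotopies induced by the commutations $s_is_j=s_js_i$ ($\lvert i-j\rvert\ge 2$), and reading the $+/-$ signs of the heights of consecutive segments produces a sequence $\epsilon$ for which, by the very way $\Phi$ is built, $\Phi(\epsilon)$ is that Coxeter element. So the real content is the injectivity of $\Phi$.

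To prove injectivity I would induct on $n$, the case $n=2$ (empty sign sequence, $\Phi=s_1$) being trivial. Fix the convention $(\sigma\tau)(j)=\sigma(\tau(j))$ and write $s_i=(i,\,i+1)$. Put $\epsilon'=[\epsilon_2,\ldots,\epsilon_{n-2}]$ and $v:=\Phi(\epsilon')\in\mathfrak{S}_{n-1}\subseteq\mathfrak{S}_{n}$, so that $v$ fixes $n$ and, directly from the recursion, $\Phi(\epsilon)=s_{n-1}v$ when $\epsilon_{n-1}=+$ and $\Phi(\epsilon)=v\,s_{n-1}$ when $\epsilon_{n-1}=-$. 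The crucial observation is the value of $w:=\Phi(\epsilon)$ at $n$: if $\epsilon_{n-1}=+$ then $w(n)=s_{n-1}(v(n))=s_{n-1}(n)=n-1$, while if $\epsilon_{n-1}=-$ then $w(n)=v(s_{n-1}(n))=v(n-1)$, which I claim lies in $\{1,\ldots,n-2\}$. Granting this, $\epsilon_{n-1}$ is recovered from $w$ (namely $\epsilon_{n-1}=+$ if and only if $w(n)=n-1$), after which one recovers $v=s_{n-1}w$ or $v=w\,s_{n-1}$ accordingly, and by the induction hypothesis $v$ determines $\epsilon'$. Hence $w$ determines $\epsilon$, so $\Phi$ is injective.

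The step that requires a little care is the claim $v(n-1)\in\{1,\ldots,n-2\}$, i.e.\ $v(n-1)\notin\{n-1,n\}$. Since $v$ fixes $n$ it cannot send $n-1$ to $n$, so what is actually needed is that $v$ has no fixed point in $\{1,\ldots,n-1\}$. This is part of the standard fact recorded in Remark~\ref{rem:cyclic} that a Coxeter element of $\mathfrak{S}_{n-1}$ is an $(n-1)$-cycle; to keep the induction self-contained I would instead carry along, as a second clause of the same induction, the weaker statement that $\Phi(\delta)$ has no fixed point for every sign sequence $\delta$. Assuming $v\in\mathfrak{S}_{n-1}$ fixes $n$ and has no fixed point in $\{1,\ldots,n-1\}$, a short verification over the cases $j\le n-2$, $j=n-1$, $j=n$ shows that both $s_{n-1}v$ and $v\,s_{n-1}$ have no fixed point in $\{1,\ldots,n\}$; this simultaneously closes the induction and supplies the inequality used above.

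I expect the injectivity argument, and within it the bookkeeping of the ``no fixed point'' clause, to be the only genuine obstacle; well-definedness and surjectivity are immediate from the set-up. An alternative route is to build the inverse directly on Coxeter elements — send $c$ to $+$ or $-$ according to whether $c(n)=n-1$, replace $c$ by $s_{n-1}c$ or $c\,s_{n-1}$, and recurse — but this needs a priori knowledge that the recursion stays inside the Coxeter elements, which amounts to the same work; proving $\Phi$ injective is the cleaner presentation.
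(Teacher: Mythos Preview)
Your proof is correct but follows a genuinely different route from the paper's. The paper proves injectivity by invoking Remark~\ref{rem:cyclic}(3) directly: a sign sequence determines the ordered lists of stanza and co-stanza starting points $p_1,\ldots,p_s$ and $q_1,\ldots,q_t$, and Remark~\ref{rem:cyclic}(3) identifies the associated Coxeter element as the explicit $n$-cycle $(p_1,p_2,\ldots,p_s,q_1,q_2,\ldots,q_t)$; distinct sign sequences give distinct such lists, hence distinct permutations. You instead induct on $n$, reading off $\epsilon_{n-1}$ from the single value $w(n)$ and carrying the auxiliary ``no fixed point'' clause to make the dichotomy $w(n)=n-1$ versus $w(n)\le n-2$ work. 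Your approach is more elementary and fully self-contained---it sidesteps the stanza/co-stanza machinery and does not appeal to Remark~\ref{rem:cyclic}---while the paper's approach has the advantage of producing the complete cycle decomposition of the Coxeter element in one stroke, a description that is reused throughout Sections~2--4.
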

To prove the theorem above,
we have only to show that distinct sequences of signatures
give distinct elements in $\mathfrak{S}_n$.
This will be shown after Remark~\ref{rem:cyclic}.

For $C$ a Coxeter element in $\mathfrak{S}_{n}$,
let $\epsilon = [\epsilon_2, \epsilon_3, \ldots, \epsilon_{n-1}]$
be a sequence of plus and minus signs of length $n-2$
defined above (each sign is tagged to vertical lines
except the left most and the right most ones).
We call $\epsilon$ a {\em Coxeter path} of $C$ and denote it by $p(C)$.
Using the Coxeter path $p(C)$,
we can define the {\em height} $ht(C)$ of $C$
by $ht(C)=\sum_{i=2}^{n-1}\epsilon_i$.
Here $\epsilon_i$ takes the value $+1$ [resp.~$-1$]
if $+$ [resp.~$-$] sign is assigned.

We also introduce the notion of {\em stanza} and {\em co-stanza}
of a standard Amida diagram of a Coxeter element.
Stanzas are ascending staircases from lower left to upper right
and co-stanzas are ascending staircases from lower right to
upper left.
We label the beginning points of
stanzas [resp.~co-stanzas] as $p_1, p_2, \ldots$
[resp.~$q_1, q_2, \ldots$] from left to right
[resp.~right to left]
and call the stanza [resp.~co-stanza]
which starts at $p_i$ [resp.~$q_i$] the $i$-th stanza
[resp.~$i$-th co-stanza].
For example, the Amida diagram~\eqref{xy:stanzas} of heights 1 
has 4 stanzas and 5 co-stanzas
which start at $p_1, p_2, p_3, p_4$
and $q_1, q_2, q_3, q_4, q_5$ respectively.
\begin{equation}\label{xy:stanzas}
\begin{xy}
(0,0)="B1",(0,10)="T1",
(5,0)="B2",(5,10)="T2",
(10,0)="B3",(10,10)="T3",
(15,0)="B4",(15,10)="T4",
(20,0)="B5",(20,10)="T5",
(25,0)="B6",(25,10)="T6",
(30,0)="B7",(30,10)="T7",
(35,0)="B8",(35,10)="T8",
(40,0)="B9",(40,10)="T9",
{\ar @{-}"B1";"T1"},
{\ar @{-}"B2";"T2"},
{\ar @{-}"B3";"T3"},
{\ar @{-}"B4";"T4"},
{\ar @{-}"B5";"T5"},
{\ar @{-}"B6";"T6"},
{\ar @{-}"B7";"T7"},
{\ar @{-}"B8";"T8"},
{\ar @{-}"B9";"T9"},
{\ar @{-}(0,5);(5,5)}, 
{\ar @{-}(5,7);(10,7)},
{\ar @{-}(10,5);(15,5)},
{\ar @{-}(15,3);(20,3)},
{\ar @{-}(20,5);(25,5)},
{\ar @{-}(25,7);(30,7)},
{\ar @{-}(30,5);(35,5)},
{\ar @{-}(35,7);(40,7)},
{\ar @{}(2,-2);(2,10)^<{p_1}},
{\ar @{}(7,-2);(7,10)^<{q_5}},
{\ar @{}(12,-2);(12,10)^<{p_2}},
{\ar @{}(17,-2);(17,10)^<{p_3}},
{\ar @{}(22,-2);(22,10)^<{q_4}},
{\ar @{}(27,-2);(27,10)^<{q_3}},
{\ar @{}(32,-2);(32,10)^<{p_4}},
{\ar @{}(37,-2);(37,10)^<{q_2}},
{\ar @{}(42,-2);(42,10)^<{q_1}},
\end{xy}
\end{equation}
As for the stanzas and co-stanzaz, we note the following.
\begin{rem}\label{rem:cyclic}
Let $p_1=1, p_2, \ldots, p_s$ and $q_1=n, q_2,\ldots, q_t$ be
the beginning points of stanzas and co-stanzas respectively
of the standard Amida diagram of a Coxeter element $C$.
\begin{enumerate}
\item[(1)]
Let $p(C)$ be the Coxeter path of $C$.
Then $p_2, p_3, \ldots$
[resp.~$q_2, q_3, \ldots$] correspond to the coordinates of $p(C)$
which have $-$ [resp.~$+$] signs.
\item[(2)]
The height $ht(C)$ of $C$ is equal to
the number of co-stanzas minus the number of stanzas.
Namely $ht(C) = t - s$, which is also equal to the number
of $+$ signs minus the number of $-$ signs
in $p(C)$.
\item[(3)]
$C(p_1) = p_2, \ldots, C(p_{s-1}) = p_s, C(p_s) = q_1$,
$C(q_1) = q_2, \ldots, C(q_{t-1}) = q_t, C(q_t) = p_1$.
In particular, all Coxeter elements are conjugate\footnote{
In any type of finite irreducible Coxeter groups,
all Coxeter elements are conjugate\cite{Bou,H}.}
and their cycle type is $(n)$ (1-cycle of length $n$).
\end{enumerate}
\end{rem}
\begin{proof}[{\sc Proof of Theorem~1.1}]
As we stated before Theorem~1.1,
there exists one to one correspondence between
the standard Amida diagrams for Coxeter elements in $\mathfrak{S}_n$
and sequences of $+$ and $-$ signs of length $n-2$.
The sequences of the signs determine
the beginning points of stanzas and co-stanzas uniquely.
By Remark~\ref{rem:cyclic}(3),
$C$, a Coxeter element in $\mathfrak{S}_n$
which (by the Amida diagram)
causes stanzas $p_1, p_2, \ldots$ and co-stanzas
$q_1, q_2, \ldots$ 
is a cyclic permutation $(p_1=1, p_2, \ldots, q_1, q_2, \ldots)$.
This explains that Coxeter elements determined by
the sequences of signs are all distinct.
Hence we find
the number of Coxeter elements in $\mathfrak{S}_n$ is $2^{n-2}$.
\end{proof}

For $\sigma$ an element of $\mathfrak{S}_{n}$,
the {\em inversion number} $\iota(\sigma)$ is defined by
\begin{eqnarray*}
	\iota(\sigma) &=& |\{(i,j)\ ;\ i< j,\ \sigma(i) > \sigma(j)\}|\ .
\end{eqnarray*}
The inversion number $\iota(\sigma)$
coincides with the length $\ell(\sigma)$
and there is an Amida diagram for $\sigma$
which has $\ell(\sigma)$ horizontal segments.
The longest word $w_0\in\mathfrak{S}_{n}$ maps $i$ to $w_0(i) = n+1 -i$,
and $\ell(w_0) = n(n-1)/2$.

In terms of Amida diagrams, it is easy to show that
Coxeter elements are characterized by the cycle type and the inversion number.
\begin{prop}
Let $\sigma$ be an element of  $\mathfrak{S}_{n}$.
If $\iota(\sigma) = n-1$ and the cycle type of  $\sigma$ is $(n)$
1-cycle of length $n$,
then $\sigma$ is a Coxeter element of  $\mathfrak{S}_{n}$.
\end{prop}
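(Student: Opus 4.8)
The plan is to combine the Amida-diagram picture with the fact that an $n$-cycle acts transitively on $\{1,\dots,n\}$. Since $\iota(\sigma)=\ell(\sigma)=n-1$, there is an Amida diagram $D$ for $\sigma$ with exactly $n-1$ horizontal segments, and reading its segments off in order yields a reduced expression $\sigma=s_{i_1}s_{i_2}\cdots s_{i_{n-1}}$. It suffices to prove that $i_1,\dots,i_{n-1}$ is a permutation of $1,2,\dots,n-1$ (in particular, the indices are pairwise distinct): then $\sigma$ is, by definition, a product of the $n-1$ distinct generators in some order, i.e.\ a Coxeter element.

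The only real step is to show that every generator $s_i$ ($1\le i\le n-1$) actually occurs in $D$. Suppose, for contradiction, that no horizontal segment of $D$ is placed between the $i$-th and the $(i+1)$-st vertical lines. Then a runner that starts in one of the columns $1,\dots,i$ can never reach any of the columns $i+1,\dots,n$, and vice versa, since crossing from one block to the other would require traversing a segment between columns $i$ and $i+1$. Hence $\sigma$ maps $\{1,\dots,i\}$ onto itself. But a permutation of $\mathfrak{S}_n$ possessing an invariant subset of size $i$ with $1\le i\le n-1$ cannot be an $n$-cycle, contradicting the hypothesis on the cycle type. (Algebraically: omitting $s_i$ confines $s_{i_1}\cdots s_{i_{n-1}}$ to the standard parabolic subgroup $\mathfrak{S}_{\{1,\dots,i\}}\times\mathfrak{S}_{\{i+1,\dots,n\}}$, which contains no $n$-cycle.)

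Thus all of $s_1,\dots,s_{n-1}$ occur among the $n-1$ segments of $D$, which forces each of them to occur exactly once. Therefore $\sigma=s_{i_1}\cdots s_{i_{n-1}}$ with $\{i_1,\dots,i_{n-1}\}=\{1,\dots,n-1\}$, so $\sigma$ is a Coxeter element of $\mathfrak{S}_n$.

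I do not expect a serious obstacle here: the content is essentially a one-line parabolic/transitivity observation, dressed in Amida-diagram language to match the ``in terms of Amida diagrams'' framing. The only care needed is to invoke the length hypothesis $\ell(\sigma)=n-1$ at the right moment (to upgrade ``every generator appears'' to ``every generator appears exactly once'') and the cycle-type hypothesis to rule out an invariant block of columns.
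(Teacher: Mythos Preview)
Your proof is correct and essentially identical to the paper's: both take a reduced Amida diagram with $n-1$ segments, argue that a missing column of segments would yield an invariant block (hence no $n$-cycle), and then use the count to force each generator to appear exactly once. The only cosmetic difference is that the paper phrases the counting step via pigeonhole (``more than one somewhere implies none somewhere else'') before invoking the runner argument, whereas you go directly to ``none somewhere'' and apply the count afterward.
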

\begin{proof}[{\sc Proof}]
Since $\iota(\sigma) = \ell(\sigma)$,
we have an expression of $\sigma$ whose Amida diagram
has exactly $n-1$ horizontal segments.
If there are more than one horizontal segments between
adjacent pair of vertical lines,
then there is an adjacent pair of vertical lines
which have no horizontal segments between them.
A runner who starts at one of the bottom of them cannot move to
the other. Such an Amida diagram is not a cycle of length $n$.
Thus, if $\iota(\sigma) = n-1$
and the cycle type of $\sigma$ is $(n)$,
then its Amida diagram consists of $n-1$ horizontal segments,
each of which is assigned each adjacent pair of the vertical lines.
This implies $\sigma$ is a Coxeter element in $\mathfrak{S}_n$.
\end{proof}

\section{Coxeter elements which afford the longest word in $\mathfrak{S}_{2m}$}

In the previous section,
we defined the standard Amida diagrams of Coxeter elements
and showed that
each of them is distinguished by a sequence of plus and minus signs
of length $n-2$.
In this section, we characterize the Coxeter elements which afford
the longest element,
when $n-1$, the number of Coxeter generators, is odd, say $n=2m$.

Let $C$ be a Coxeter element in
$\mathfrak{S}_{2m} = \langle s_1, s_2, \ldots, s_{2m-1}\rangle$.
Recall that the Coxeter number $h$ (which is equal to the order of $C$)
is $n=2m$ (Remark~\ref{rem:cyclic}(3)).
In order that $C^{h/2} = C^{n/2} = C^m$ is the longest,
it should hold that
\begin{equation}\label{eq:Cpow}
	C^{m}(j) = n+1-j  \mbox{ for } j= 1, 2, \ldots, n.
\end{equation}
Since $C$ is a bijection, there exists $k\in\{1, 2, \ldots, n\}$
such that $C(k) = j$.
Hence \eqref{eq:Cpow} would be written as $C^{m}(C(k)) = n+ 1- C(k)$
for $k = 1, 2, \ldots, n$.
Again applying \eqref{eq:Cpow} for $k$, we obtain
\begin{equation*}
	C^m (k) = n+1-k
\end{equation*}
and we have
\begin{equation*}
	C^{m}(C(k)) = C(C^m(k)) = C(n+1-k) = n + 1 -C(k).
\end{equation*}
This implies that
in the $m$-th power of the standard Amida diagram of $C$,
a runner who starts $k$-th position from the left
arrives at the $C(k)$-th position from the left,
while a runner who starts
at the $k$-th position from the right
arrives at the $C(k)$-th position from the right.
Hence we have the following theorem.

\begin{thm}
Let $n = 2m$ be an even integer
and $C$ a Coxeter element in $\mathfrak{S}_{2m}$.
Then $C^m$ is the longest word in $\mathfrak{S}_{2m}$
if and only if the corresponding standard Amida diagram of $C$
is symmetric with respect to the vertical axis
between the $m$-th and the $(m+1)$-st vertical lines.
\end{thm}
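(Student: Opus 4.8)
The discussion preceding the theorem already performs the ``only if'' direction at the level of permutations: if $C^m=w_0$ then, since $w_0$ is an involution,
\[
 w_0Cw_0=w_0Cw_0^{-1}=C^mCC^{-m}=C,
\]
equivalently $C(n+1-j)=n+1-C(j)$ for all $j$. What remains is (a) to make precise the equivalence between this algebraic condition and the stated geometric condition on the standard Amida diagram of $C$, and (b) to prove the ``if'' direction, namely that $w_0Cw_0=C$ already forces $C^m=w_0$. The plan is to isolate (a) as a general fact about Amida diagrams and then dispatch both implications by a short computation inside $\mathfrak{S}_{2m}$.

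For (a), I would argue as follows. Reflecting any Amida diagram of a permutation $\sigma$ in the vertical axis between the $m$-th and $(m+1)$-st lines sends the segment between lines $i$ and $i+1$ to the segment between lines $n-i$ and $n+1-i$; that is, it replaces each generator $s_i$ occurring in the diagram by $s_{n-i}$, which is exactly the automorphism of $\mathfrak{S}_n$ induced by conjugation by $w_0$, and at the level of permutations it realizes the left--right flip $\sigma\mapsto w_0\sigma w_0$. Moreover this reflection carries \emph{standard} Amida diagrams to standard Amida diagrams: the consecutive segments $k$ and $k+1$, which sit at distinct heights, are carried to the consecutive segments $n-k$ and $n-k-1$, still at distinct heights, and each adjacent pair of vertical lines still carries exactly one segment. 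Since $w_0Cw_0$ is again a Coxeter element and a Coxeter element has a \emph{unique} standard Amida diagram up to isotopy (this is what the bijection of Theorem~1.1 and its proof provide), the standard Amida diagram of $C$ is symmetric if and only if it coincides (up to isotopy) with the standard Amida diagram of $w_0Cw_0$, i.e.\ if and only if $w_0Cw_0=C$.

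Granting (a), the theorem reduces to the equivalence ``$w_0Cw_0=C \iff C^m=w_0$'' in $\mathfrak{S}_{2m}$, whose forward implication is the converse and whose backward implication is the display above. For the converse, suppose $w_0Cw_0=C$, so $w_0$ centralizes $C$. By Remark~\ref{rem:cyclic}(3), $C$ is an $n$-cycle with $n=2m$; writing $C=(a_0\,a_1\,\cdots\,a_{n-1})$ with $a_{i+1}=C(a_i)$, the relation $w_0Cw_0^{-1}=C$ says that $(w_0a_0,\,w_0a_1,\,\dots)$ is the same cycle as $(a_0,\,a_1,\,\dots)$, hence a cyclic rotation of it: $w_0(a_i)=a_{i+c}$ for a fixed $c$ and all $i$, so $w_0=C^c$. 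From $w_0^2=1$ and $\mathrm{ord}(C)=2m$ we get $2c\equiv 0\pmod{2m}$, so $c\equiv 0$ or $c\equiv m\pmod{2m}$; since $w_0\neq 1$ (as $n\geq 2$), the first is impossible and therefore $C^m=w_0$.

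The only delicate point is part (a): one must verify carefully that the vertical reflection preserves the ``standard'' shape of the diagram and realizes precisely the conjugation $C\mapsto w_0Cw_0$, and then invoke the uniqueness of the standard Amida diagram of a Coxeter element to pass between ``the diagram is symmetric'' and ``$w_0Cw_0=C$''. An alternative, purely combinatorial route to (a) is to translate ``symmetric'' into the palindromic condition $\epsilon_j=-\epsilon_{n+1-j}$ for $2\le j\le n-1$ on the Coxeter path $p(C)=[\epsilon_2,\dots,\epsilon_{n-1}]$ and check directly, using the cycle description $(p_1,\dots,p_s,q_1,\dots,q_t)$ of $C$ from Remark~\ref{rem:cyclic}, that this holds if and only if $C(n+1-j)=n+1-C(j)$ for all $j$; the reflection argument above is the more transparent of the two.
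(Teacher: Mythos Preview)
Your proof is correct. The core reduction---identifying the diagram symmetry with the condition $C(n+1-k)=n+1-C(k)$, i.e.\ $w_0Cw_0=C$---matches the paper's, which derives exactly this relation from $C^m=w_0$ in the paragraph preceding the theorem. The difference is one of completeness: the paper carries out only the ``only if'' direction explicitly and then simply announces the biconditional, whereas you supply a clean argument for the converse via the centralizer of an $n$-cycle (forcing $w_0\in\langle C\rangle$ and hence $w_0=C^m$). You are also more careful than the paper about step~(a), the passage between the algebraic condition $w_0Cw_0=C$ and the geometric symmetry of the standard Amida diagram, invoking the uniqueness of the standard diagram from Theorem~1.1; the paper compresses this into a single sentence. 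What your route buys is a genuinely two-sided proof; the cost is only a few lines of elementary group theory that the paper evidently treated as implicit.
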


The symmetric standard Amida diagrams as in the theorem above
are obtained from the left half of the diagram by reflecting 
the image of it
with respect to the vertical axis.
Hence we have the following Corollary.

\begin{cor}\label{cor:numofcoxodd}
The number of distinct
Coxeter elements which satisfy the above theorem is $2^{m-1}$.
\end{cor}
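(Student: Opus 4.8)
The plan is to set up a bijection between the symmetric standard Amida diagrams of Theorem~2.2 and a combinatorial data set whose cardinality is easy to count, namely sequences of plus and minus signs. Recall from the discussion preceding Theorem~1.1 that a Coxeter element $C$ in $\mathfrak{S}_{2m}$ is determined by its Coxeter path $p(C) = [\epsilon_2, \epsilon_3, \ldots, \epsilon_{2m-1}]$, a sign sequence of length $2m-2$, and that the standard Amida diagram is recovered by placing the $i$-th segment above or below the $(i-1)$-st according to $\epsilon_i$. So the first step is to translate the geometric symmetry condition of Theorem~2.2 --- invariance under reflection in the vertical axis between the $m$-th and $(m+1)$-st vertical lines --- into a condition on the sign sequence $[\epsilon_2, \ldots, \epsilon_{2m-1}]$.

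The key observation is that reflection in that axis sends the $i$-th vertical line to the $(2m+1-i)$-th, hence sends the segment between lines $i$ and $i+1$ (i.e.\ $s_i$) to the segment between lines $2m-i$ and $2m+1-i$ (i.e.\ $s_{2m-i}$), while preserving heights. The sign $\epsilon_i$ records whether segment $s_i$ is higher or lower than segment $s_{i-1}$; under the reflection, segment $s_i \leftrightarrow s_{2m-i}$ and segment $s_{i-1} \leftrightarrow s_{2m-i+1}$, so the relative-height datum $\epsilon_i$ is carried to the relative-height datum relating $s_{2m-i}$ and $s_{2m-i+1}$, which is $\epsilon_{2m-i+1}$ --- but with its sense flipped, because "higher than its left neighbour" becomes "higher than its right neighbour" after a horizontal flip. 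Hence the diagram is symmetric if and only if $\epsilon_i = -\,\epsilon_{2m+1-i}$ for all $i$ in the relevant range. Next I would check that the indices $\{2, 3, \ldots, 2m-1\}$ pair up under $i \mapsto 2m+1-i$ into exactly $m-1$ two-element orbits with no fixed point (the map has no fixed point since $2m+1$ is odd), so that such a sign sequence is freely determined by its values on a set of $m-1$ representatives, one from each orbit.

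The main obstacle --- really the only subtle point --- is getting the sign-flip in the symmetry condition exactly right and confirming that the correspondence of the preceding section (sign sequence $\leftrightarrow$ standard Amida diagram, which is one-to-one up to graph isotopy) is compatible with the reflection, i.e.\ that reflecting a standard diagram yields another standard diagram and that this matches the sign-level involution just described. Once that is pinned down, the count is immediate: the symmetric standard Amida diagrams are in bijection with sign sequences of length $2m-2$ satisfying $\epsilon_i = -\epsilon_{2m+1-i}$, and such a sequence is equivalent to an arbitrary choice of sign on $m-1$ orbit representatives, giving $2^{m-1}$. Finally, since distinct standard Amida diagrams give distinct Coxeter elements (Theorem~1.1 and its proof via Remark~\ref{rem:cyclic}(3)), and by Theorem~2.2 these are precisely the Coxeter elements $C$ with $C^m = w_0$, we conclude that there are exactly $2^{m-1}$ such Coxeter elements. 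Alternatively, and perhaps more cleanly, one can argue directly as the paragraph before the corollary suggests: a symmetric standard diagram is uniquely reconstructed from its left half, the left half is (a standard Amida picture on $m$ lines freely built by) a choice of the signs $\epsilon_2, \ldots, \epsilon_m$ together with the sign $\epsilon_{m+1}$ governing how the two halves join across the axis --- that is $m-1$ free binary choices --- so again the total is $2^{m-1}$.
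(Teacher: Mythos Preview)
Your main argument is correct and is essentially a more explicit version of the paper's one-sentence justification: the paper simply observes that a symmetric standard Amida diagram is determined by its left half and then reflected, giving $2^{m-1}$ choices, while you make this precise by translating the reflection symmetry into the condition $\epsilon_i = -\epsilon_{2m+1-i}$ on the Coxeter path and counting the $m-1$ two-element orbits of the fixed-point-free involution $i\mapsto 2m+1-i$ on $\{2,\ldots,2m-1\}$.

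One small slip in your closing alternative phrasing: the signs $\epsilon_2,\ldots,\epsilon_m$ already number $m-1$ and already include the ``join'' datum (since $\epsilon_m$ places the axial segment $s_m$ relative to $s_{m-1}$); the sign $\epsilon_{m+1}$ is not an additional free choice but is forced by the symmetry to equal $-\epsilon_m$. Drop the mention of $\epsilon_{m+1}$ (or, equivalently, write the left-half data as $\epsilon_2,\ldots,\epsilon_{m-1}$ together with the join sign $\epsilon_m$) and the count stands.
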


\section{Admissible Coxeter elements}
Before we consider the case $n=2m-1$,
we introduce the notion of admissible Coxeter elements.
Admissible Coxeter elements are inductively defined
from the ones in $\mathfrak{S}_{n}$
to the ones in $\mathfrak{S}_{n+2}$.

Let us consider the symmetric group $\mathfrak{S}_{n+2}$
as the permutation group of $n+2$ letters
$\{0, 1, 2, \ldots, n, n+1\}$
generated by the transpositions $s_0=(0,1)$, $s_1 = (1,2)$, $s_2 = (2,3)$,
$\ldots$, $s_{n-1} = (n-1, n)$ and $s_{n} = (n, n+1)$.
For
\[
	w = \begin{pmatrix}
		1&2&\cdots &n\\
		i_1&i_2&\cdots &i_{n}
	\end{pmatrix}
	\in\mathfrak{S}_{n}
\]
we define $\overline{w}\in \mathfrak{S}_{n+2}$ by
\[
	\overline{w} =
	\begin{pmatrix}
		0&1&2&\cdots &n&n+1\\
		0&i_1&i_2&\cdots &i_{n}&n+1
	\end{pmatrix}.
\]
If there is no confusion,
we merely write $w$ to refer the image $\overline{w}\in\mathfrak{S}_{n+2}$.

\begin{defn}
Let $C$ be a Coxeter element in $\mathfrak{S}_{n}$.
We identify $C$ with $\overline{C}\in\mathfrak{S}_{n+2}$.
Then $s_0s_{n}C$, $Cs_0s_{n}$, $s_0Cs_{n}$ and $s_{n}Cs_0$
are all Coxeter elements in $\mathfrak{S}_{n+2}$.
We call these elements {\em extensions} of $C$.
\end{defn}
\begin{rem}
Every Coxeter element in $\mathfrak{S}_{n+2}$ is obtained
from $\mathfrak{S}_{n}$ above way.
In other words, a Coxeter element $C\in\mathfrak{S}_{n}$
has (exactly) one of the expression of the form
$s_1s_{n-1}C'$, $C's_1s_{n-1}$, $s_1C's_{n-1}$ or $s_{n-1}C's_1$,
where $C'$ is an expression of a Coxeter element in
$\mathfrak{S}_{n-2} = \langle s_2, s_3, \ldots, s_{n-2}\rangle$.
\end{rem}

As for the heights of extensions, we have the following lemma.
\begin{lem}\label{lem:admissible}
Let $C$ be a Coxeter element in $\mathfrak{S}_{n}$
and $\eta =ht(C)$ its height.
Then  $ht(s_0s_{n}C)$, $ht(Cs_0s_{n})$, $ht(s_0Cs_{n})$
and $ht(s_{n}Cs_0)$
are $\eta$, $\eta$, $\eta-2$ and $\eta+2$ respectively.
\end{lem}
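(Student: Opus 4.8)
The plan is to reduce everything to Coxeter paths, using the identity $ht(D)=\sum_i\epsilon_i=\#\{+\text{ signs in }p(D)\}-\#\{-\text{ signs in }p(D)\}$. The one fact needed from Section~1 is the dictionary between a reduced word and its standard Amida diagram: the horizontal segment labelled $i$ (between the $i$-th and $(i+1)$-st vertical lines) lies higher than the segment labelled $i-1$ exactly when the sign on line $i$ is $+$, and multiplying a word by a generator on the left (resp.\ right) places that generator's segment above (resp.\ below) the whole diagram of the word.

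First I would exhibit an explicit standard Amida diagram for each of the four extensions. Take the standard diagram of $C$ drawn on the vertical lines $1,2,\dots,n$ and extend it to the lines $0,1,\dots,n+1$ by inserting one segment between lines $0,1$ (call it segment~$0$) and one between lines $n,n+1$ (segment~$n$). For $s_0s_nC$ both new generators stand to the left of $C$, so place segments~$0$ and~$n$ above the whole diagram of $C$; for $Cs_0s_n$ both stand to the right, so place them below; for $s_0Cs_n$ place segment~$0$ above and segment~$n$ below; for $s_nCs_0$ place segment~$n$ above and segment~$0$ below. (When the two new segments land on the same side their mutual order is immaterial, since $s_0$ and $s_n$ commute and these two segments never share a vertical line.) Each resulting diagram has exactly one segment between every adjacent pair of lines, and segments with consecutive labels sit at distinct heights: for the labels $1,\dots,n-1$ because the diagram of $C$ is standard, and for the pairs $\{0,1\}$ and $\{n-1,n\}$ because segment~$0$ and segment~$n$ were placed strictly above or strictly below the whole diagram of $C$. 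Hence this is the standard Amida diagram of the corresponding extension---which is a Coxeter element by the definition of extensions, and whose standard diagram is unique up to graph isotopy.

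It follows that the Coxeter path of the extension is $[\delta_1,\epsilon_2,\dots,\epsilon_{n-1},\delta_n]$, where $\epsilon_2,\dots,\epsilon_{n-1}$ are the entries of $p(C)$ (the relative heights of segments $1,\dots,n-1$ having been left untouched), $\delta_1$ is the sign on line~$1$ comparing segments~$1$ and~$0$, and $\delta_n$ is the sign on line~$n$ comparing segments~$n$ and~$n-1$. Thus $ht(\text{extension})=\eta+\delta_1+\delta_n$, and it remains only to read off these two signs. Because segment~$0$ is the lower-indexed neighbour of segment~$1$ whereas segment~$n$ is the higher-indexed neighbour of segment~$n-1$, the two generators contribute with opposite conventions: $s_0$ on the left forces segment~$1$ below segment~$0$, so $\delta_1=-1$, while $s_0$ on the right gives $\delta_1=+1$; and $s_n$ on the left puts segment~$n$ above segment~$n-1$, so $\delta_n=+1$, while $s_n$ on the right gives $\delta_n=-1$. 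Feeding in the four cases, $(\delta_1,\delta_n)$ equals $(-1,+1)$, $(+1,-1)$, $(-1,-1)$, $(+1,+1)$ for $s_0s_nC$, $Cs_0s_n$, $s_0Cs_n$, $s_nCs_0$ respectively, so the heights are $\eta$, $\eta$, $\eta-2$, $\eta+2$, as claimed.

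The only real obstacle is the claim in the second step that the diagram constructed there is genuinely the standard Amida diagram of the extension---equivalently, that pre-/post-multiplying $C$ by $s_0$ and $s_n$ merely pads the sign sequence of $C$ by two entries without perturbing the rest. This rests on the commutation relations together with the uniqueness up to graph isotopy of standard Amida diagrams recorded in Section~1; once that is granted, the remainder is a short bookkeeping of signs. (Alternatively one can avoid diagrams entirely: commute $s_0$ and $s_n$ into the shuffle factorization of $C$ as a decreasing product of the left-inserted generators, then $s_1$, then an increasing product of the right-inserted generators, and read the signs directly off the resulting reduced word; but the diagrammatic argument is more transparent.)
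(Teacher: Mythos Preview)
Your proof is correct and follows essentially the same approach as the paper's own proof: both work in terms of standard Amida diagrams, observe that pre- or post-multiplying by $s_0$ (resp.\ $s_n$) simply places a new leftmost (resp.\ rightmost) segment above or below the existing diagram, and track the resulting $\pm 1$ effect on the height. The paper phrases this as ``multiplying $s_0$ from the left lowers $ht$ by $1$'' etc., which is exactly your $\delta_1,\delta_n$ bookkeeping; you are just more explicit in verifying that the extended diagram is genuinely standard and that the inner signs $\epsilon_2,\dots,\epsilon_{n-1}$ are unchanged.
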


\begin{proof}[{\sc Proof}]
In terms of the standard Amida diagrams,
multiplying $s_0$ from the left corresponds to adding a horizontal segment
in a higher  position to the $s_1$'s segment.
Since the height of a Coxeter element is measured by the relative position
of the right most horizontal segment with respect to the left most horizontal segment,
this addition of $s_0$ lowers the height of $C$ by 1.
Similarly, multiplying $s_0$ from the right raises the height of $C$ by 1.
Multiplying $s_{n}$ from the left [right]
also raises [lowers] the height of $C$ by 1.
The result follows from these observations.
\end{proof}

Under the preparation above, 
the admissible Coxeter elements in $\mathfrak{S}_{2m-1}$
are defined as follows.

\begin{defn}\label{defn:admissible}
There exists two Coxeter elements $s_1s_2$ and $s_2s_1$
in $\mathfrak{S}_{3}$.
Both of them are by definition admissible.
Let $C\in\mathfrak{S}_{2m-1} (m\geq 2)$ be an admissible Coxeter element
and ${\cal E}(C)$ one of the extensions of $C$.
If $|ht({\cal E}(C))|\leq 1$ then the extension is called {\em admissible}.
Otherwise the extension is {\em non-admissible}.
An {\em admissible Coxeter elements} in $\mathfrak{S}_{2m-1}$
is defined as a Coxeter element
in $\mathfrak{S}_{2m-1}$ obtained from
$s_1s_2$ or $s_2s_1$ in $\mathfrak{S}_{3}$ by the iterative application of
the admissible extensions.
\end{defn}

Since the height of $s_2s_1$ [resp.~$s_1s_2$] in $\mathfrak{S}_3$
is $1$ [resp.~$-1$],
by Lemma~\ref{lem:admissible} the heights of admmisible Coxeter
elements are $+1$ or $-1$.
So the definition above is rewritten as follows.

\begin{rem}\label{rem:admissible}
Let $C$ be an admissible Coxeter element in
$\mathfrak{S}_{2m-1}$
\begin{enumerate}
\item[(1)]
If $C$ has an expression such that $ht(C)=1$,
then the expressions $s_0s_{2m-1}C$, $Cs_0s_{2m-1}$ and $s_0Cs_{2m-1}$ are
admissible in $\mathfrak{S}_{2m+1}$.
\item[(2)]
If $C$ has an expression such that $ht(C)=-1$,
then the expressions $s_0s_{2m-1}C$, $Cs_0s_{2m-1}$ and $s_{2m-1}Cs_0$ are
admissible in $\mathfrak{S}_{2m+1}$.
\end{enumerate}
\end{rem}

Note that $ht(C) = \pm 1$ does not mean $C$ is admissble.
For example $C = s_0\overline{s_4s_3s_2s_1}s_5\in\mathfrak{S}_{5+2}$
has its height $ht(C) = 1$, but $C$ is non-admissible,
since $ht(s_4s_3s_2s_1) = 3$.

From Definition~\ref{defn:admissible} and Remark~\ref{rem:admissible},
we have the following corollary.
\begin{cor}
There are $2\cdot 3^{m-2}$ admissible Coxeter elements
in $\mathfrak{S}_{2m-1}$.
\end{cor}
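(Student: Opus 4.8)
The plan is to prove the formula by induction on $m$, using the recursive definition of admissibility. Write $a_m$ for the number of admissible Coxeter elements in $\mathfrak{S}_{2m-1}$. For the base case $m=2$, the group $\mathfrak{S}_{3}$ has precisely the two Coxeter elements $s_1s_2$ and $s_2s_1$, both of which are admissible by Definition~\ref{defn:admissible}; hence $a_2 = 2 = 2\cdot 3^{0}$. The inductive step consists of proving the recursion $a_{m+1} = 3\,a_m$, from which $a_m = 2\cdot 3^{m-2}$ follows at once.

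To set up the recursion, recall that, unwinding Definition~\ref{defn:admissible}, the admissible Coxeter elements of $\mathfrak{S}_{2m+1}$ are exactly the admissible extensions of the admissible Coxeter elements of $\mathfrak{S}_{2m-1}$. Since every admissible Coxeter element has height $+1$ or $-1$ (as noted just before Remark~\ref{rem:admissible}, via Lemma~\ref{lem:admissible}), Remark~\ref{rem:admissible} tells us that a given admissible $C\in\mathfrak{S}_{2m-1}$ has exactly three admissible extensions: if $ht(C)=1$ these are $s_0s_{2m-1}C$, $Cs_0s_{2m-1}$, $s_0Cs_{2m-1}$, and if $ht(C)=-1$ these are $s_0s_{2m-1}C$, $Cs_0s_{2m-1}$, $s_{2m-1}Cs_0$ --- in either case the fourth extension has height $\pm 3$ by Lemma~\ref{lem:admissible} and so is non-admissible. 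Thus the admissible Coxeter elements of $\mathfrak{S}_{2m+1}$ are exhausted by $a_m$ lists of three elements, which already gives $a_{m+1}\le 3a_m$.

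The delicate point, and the one I expect to require the most care, is showing that no element is listed twice, so that the inequality is an equality. Both the absence of repetitions within a single list and the disjointness of the lists attached to distinct $C$ follow from the Remark immediately after the definition of extensions, which states that every Coxeter element of $\mathfrak{S}_{2m+1}$ is obtained from $\mathfrak{S}_{2m-1}$ in exactly one way --- that is, it has a unique expression among the four extension forms, with a uniquely determined middle factor. Indeed, the three admissible extensions of a fixed $C$ use three distinct forms among the four, hence are distinct group elements; and if an admissible $D\in\mathfrak{S}_{2m+1}$ were an admissible extension of two admissible elements $C\ne C'$ of $\mathfrak{S}_{2m-1}$, then uniqueness of the form and of the middle factor of $D$ would force $C=C'$, a contradiction. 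Therefore the assignment $C\mapsto\{\text{admissible extensions of }C\}$ partitions the admissible Coxeter elements of $\mathfrak{S}_{2m+1}$ into $a_m$ blocks of size three, so $a_{m+1}=3a_m$, and the corollary follows by induction.
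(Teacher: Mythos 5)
Your proof is correct and follows exactly the route the paper intends (the paper states the corollary with no written proof beyond citing the definition and the remark): base case of two admissible elements in $\mathfrak{S}_3$, and each admissible element contributes exactly three admissible extensions by Lemma~\ref{lem:admissible}, with the uniqueness-of-extension remark guaranteeing no double counting. Your explicit treatment of the injectivity of the extension map is the only point the paper leaves entirely implicit, and you handle it correctly.
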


\section{Coxeter elements which afford the longest word
in $\mathfrak{S}_{2m-1}$}
All Coxeter elements in $\mathfrak{S}_{2m-1}$
have the same order $h = 2m-1$.
In this case,
the situation is rather complicated.
Since $h/2 = (2m-1)/2$ is a half integer,
we cannot define $h/2$-nd power of a Coxeter element.
On the other hand, a Coxeter element $C\in\mathfrak{S}_{2m-1}$
has even length $\ell(C) = 2m-2$.
Hence putting $w_1 = s_{i_1}\cdots s_{i_{m-1}}$
and $w_2 = s_{i_m}\cdots s_{i_{2m-2}}$,
we consider the following word
\begin{equation}\label{eq:hpCox}
	C^{h/2} = C^{h/2}_{w_2} = w_2(w_1w_2)^{m-1}
	= (s_{i_{m}}s_{i_{m+1}}\cdots s_{i_{2m-2}})C^{m-1}.
\end{equation}
Note that the definition of $C^{h/2}_{w_2}$ above
depends on the choice of $w_2$ (and $w_1$).
For example, for $C = C_1 = s_1s_3s_2s_4$
a Coxeter element in $\mathfrak{S}_5$,
another expression $C_2 = s_3s_4s_1s_2$ coincides with $C$.
According to the equation~\eqref{eq:hpCox}, we have
$C_1^{5/2} = s_3s_4C_1^2 = s_3s_4C^2$
and
$C_2^{5/2} = s_1s_2C_2^2 = s_3s_4C^2$
which do not coincide.
However we have the following lemma.
\begin{lem}
Let $C = s_{i_1}s_{i_2}\cdots s_{i_{2m-2}}\in\mathfrak{S}_{2m-1}$
be a Coxeter element in $\mathfrak{S}_{2m-1}$
and $C = w_1w_2$ an expression of $C$
such that $\ell(w_1) = \ell(w_2) = m-1$.
If $C^{h/2}_{w_2}$ affords the longest element in $\mathfrak{S}_{2m-1}$
for the expression, then such $w_2$ is uniquely determined.
\end{lem}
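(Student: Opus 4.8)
The plan is to translate the statement into a single equation in the group $\mathfrak{S}_{2m-1}$ and to solve it directly. By \eqref{eq:hpCox}, for any factorisation $C = w_1w_2$ with $\ell(w_1)=\ell(w_2)=m-1$ one has $C^{h/2}_{w_2} = w_2(w_1w_2)^{m-1} = w_2C^{m-1}$; write $w_0$ for the unique longest element of $\mathfrak{S}_{2m-1}$. The word $w_2(w_1w_2)^{m-1}$ has $(m-1) + (m-1)(2m-2) = (m-1)(2m-1) = \ell(w_0)$ letters, so the hypothesis that $C^{h/2}_{w_2}$ affords the longest word is precisely the identity $w_2C^{m-1} = w_0$ in $\mathfrak{S}_{2m-1}$. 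The first step is then just to rearrange this equation to $w_2 = w_0C^{-(m-1)}$.

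Next I would use that the order of $C$ equals the Coxeter number $h = 2m-1$ (Remark~\ref{rem:cyclic}(3)), whence $C^{-(m-1)} = C^{h-(m-1)} = C^{m}$ and so $w_2 = w_0C^{m}$. The key observation is that the right-hand side is a single fixed element of $\mathfrak{S}_{2m-1}$ depending only on $C$; it makes no reference to the chosen reduced word $s_{i_1}\cdots s_{i_{2m-2}}$ or to where the cut is placed. Hence, if $C = w_1w_2 = w_1'w_2'$ are two factorisations of the required type for which both $C^{h/2}_{w_2}$ and $C^{h/2}_{w_2'}$ afford the longest word, then $w_2C^{m-1} = w_0 = w_2'C^{m-1}$, and cancelling $C^{m-1}$ on the right gives $w_2 = w_0C^m = w_2'$; this is exactly the asserted uniqueness.

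Finally I would append a brief clarifying remark, since the only genuine subtlety is one of scope rather than of argument: it is the group element $w_2$ that is determined --- equivalently, via $w_1 = Cw_2^{-1}$, the pair $(w_1,w_2)$ --- not the particular reduced word realising the factorisation. Indeed $C$ in general admits several reduced expressions, and, as the example preceding the lemma illustrates, distinct ones can split into genuinely different pairs $(w_1,w_2)$, of which at most one has its right half equal to $w_0C^m$. The ``hard part'' here is therefore conceptual rather than computational: one must notice that the dependence of $C^{h/2}_{w_2}$ on the chosen expression, stressed immediately before the lemma, disappears the instant one demands equality with $w_0$, leaving an equation with at most one solution in $w_2$.
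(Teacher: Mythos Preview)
Your argument is correct and is essentially the same as the paper's: both proofs observe that $w_2C^{m-1}=w_0=w_2'C^{m-1}$ and cancel $C^{m-1}$ on the right to conclude $w_2=w_2'$. Your additional computation $w_2=w_0C^{m}$ and your closing remark that it is the group element (not a particular reduced word) that is determined are pleasant clarifications, but they do not change the underlying route.
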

\begin{proof}[{\sc Proof}]
Assume that $w_1w_2$ and $w_1'w_2'$ are both expressions of $C$.
We further assume that both $C^{h/2}_{w_2} = w_2C^{m-1}$
and $C^{h/2}_{w_2'} = w_2'C^{m-1}$ are the longest in $\mathfrak{S}_{2m-1}$.
Since the longest element in $\mathfrak{S}_{2m-1}$ is unique,
they coincide. Hence we have $w_2 = w_2'$.
\end{proof}
By the above lemma,
we merely write $C^{h/2}$ for $C^{h/2}_{w_2}$ in the following lemma.
\begin{lem}
Let $C = s_{i_1}s_{i_2}\cdots s_{i_{2m-2}}\in\mathfrak{S}_{2m-1}$
be an admissible Coxeter element
which affords the longest word in $\mathfrak{S}_{2m-1}$ by $C^{h/2}$.
Let ${\cal E}(C)$ be one of the admissible extensions of $C$.
Then the following holds.
\begin{enumerate}
\item[(1)]
If $ht(C) = 1$ and ${\cal E}(C)$ is written as $s_0s_{2m-1}C$,
then $s_0(s_{i_{m}}s_{i_{m+1}}\cdots s_{i_{2m-2}}){\cal E}(C)^{m}$
is the longest in $\mathfrak{S}_{2m+1}$.
\item[(2)]
If $ht(C) = 1$ and ${\cal E}(C)$ is written as $Cs_0s_{2m-1}$,
then $(s_{i_{m}}s_{i_{m+1}}\cdots s_{i_{2m-2}})s_0{\cal E}(C)^{m}$
is the longest in $\mathfrak{S}_{2m+1}$.
\item[(3)]
If $ht(C) = 1$ and ${\cal E}(C)$ is written as $s_0Cs_{2m-1}$,
then $(s_{i_{m}}s_{i_{m+1}}\cdots s_{i_{2m-2}})s_{2m-1}{\cal E}(C)^{m}$
is the longest in $\mathfrak{S}_{2m+1}$.
\item[(4)]
If $ht(C) = -1$ and ${\cal E}(C)$ is written as $s_0s_{2m-1}C$,
then $s_{2m-1}(s_{i_{m}}s_{i_{m+1}}\cdots s_{i_{2m-2}}){\cal E}(C)^{m}$
is the longest in $\mathfrak{S}_{2m+1}$.
\item[(5)]
If $ht(C) = -1$ and ${\cal E}(C)$ is written as $Cs_0s_{2m-1}$,
then $(s_{i_{m}}s_{i_{m+1}}\cdots s_{i_{2m-2}})s_{2m-1}{\cal E}(C)^{m}$
is the longest in $\mathfrak{S}_{2m+1}$.
\item[(6)]
If $ht(C) = -1$ and ${\cal E}(C)$ is written as $s_{2m-1}Cs_0$,
then $(s_{i_{m}}s_{i_{m+1}}\cdots s_{i_{2m-2}})s_0{\cal E}(C)^{m}$
is the longest in $\mathfrak{S}_{2m+1}$.
\end{enumerate}
\end{lem}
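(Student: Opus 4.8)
The plan is to reduce each of the six assertions to a single statement about the underlying permutation: that the displayed word, evaluated in $\mathfrak{S}_{2m+1}$, equals the longest element $w_0$ (acting on $\{0,1,\dots,2m\}$, so $w_0\colon j\mapsto 2m-j$). This reduction is immediate, since in every case the displayed word consists of exactly $1+(m-1)+m\cdot 2m=m(2m+1)=\binom{2m+1}{2}=\ell(w_0)$ Coxeter generators --- using $\ell({\cal E}(C))=2m$, as ${\cal E}(C)$ is a Coxeter element of $\mathfrak{S}_{2m+1}$ --- so once the product is shown to equal $w_0$ the expression is automatically reduced, and hence affords the longest word.

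To evaluate the product I would assemble three pieces of data. First, by Remark~\ref{rem:cyclic}(3), $C$ is the $(2m-1)$-cycle $(p_1{=}1,p_2,\dots,p_s,q_1{=}2m-1,q_2,\dots,q_t)$ with $s+t=2m-1$ and $t-s=ht(C)=\pm1$. Second, the hypothesis $C^{h/2}_{w_2}=w_2C^{m-1}=w_0'$, with $w_0'$ the longest element of $\mathfrak{S}_{2m-1}$, yields $\overline{w_2}=\overline{w_0'}\,\overline{C}^{-(m-1)}$ in $\mathfrak{S}_{2m+1}$; hence $\overline{w_2}$ fixes $0$ and $2m$ and sends the cycle point $C^{j}(1)$ to $2m-C^{j-(m-1)}(1)$, so it is completely explicit. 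Third, one checks directly from the definitions that passing from $C$ to ${\cal E}(C)$ has a transparent effect on the cycle: multiplying $C$ by $s_0$ on the left [resp.\ right] inserts $0$ into the cycle immediately before [resp.\ after] $p_1=1$, and multiplying by $s_{2m-1}$ on the left [resp.\ right] inserts $2m$ immediately before [resp.\ after] $q_1=2m-1$ (this uses that $\overline{C}$ fixes $0$ and $2m$ and that $s_0,s_{2m-1}$ commute). Consequently ${\cal E}(C)$ is an explicit $(2m+1)$-cycle, ${\cal E}(C)^m$ advances $m$ places along it, and composing with $\overline{w_2}$ and the single surviving generator ($s_0$ or $s_{2m-1}$) is a bounded computation. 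I have carried this out in case~(1): for every letter $x\notin\{0,2m-1,2m\}$ the $m$-fold shift lands in $\{1,\dots,2m-1\}$, where $\overline{w_2}$ simultaneously cancels the shift and applies $w_0'$ to return $2m-x$, on which $s_0$ acts trivially; the three remaining letters are exactly those moved by $s_0$, and they map to $2m$, $1$, $0$, so the product is $j\mapsto 2m-j$ as wanted.

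I would then cut the work to three cases by the mirror automorphism $\tau\colon s_i\mapsto s_{2m-1-i}$ of $\mathfrak{S}_{2m+1}$. Since $\tau$ fixes $w_0$, interchanges $s_0$ and $s_{2m-1}$, negates $ht$, and carries an admissible Coxeter element affording the longest word to another one, applying $\tau$ to the six statements identifies (1)$\leftrightarrow$(4), (2)$\leftrightarrow$(5), and (3)$\leftrightarrow$(6). Thus only the three cases with $ht(C)=1$ need genuine computation; these differ only in whether $0$ and $2m$ are inserted before or after $p_1$ and $q_1$, and in which generator is left over, so the bookkeeping above transfers to each with only notational changes.

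The hard part is exactly that bookkeeping: tracking how the $m$-fold shift wraps around the $(2m+1)$-cycle, pinning down which letters are displaced by the leftover generator, and keeping the composition convention consistent throughout. Degenerate small cases must be verified separately against the general formulas --- in particular the base case $m=2$, where the cycle of $C$ has only three points and some of $p_2,\dots,p_s$, $q_2,\dots,q_t$ are absent. One should also note that the factorization ${\cal E}(C)=v_1v_2$ underlying the half-power notation need not be length-additive (it fails precisely when $s_1$ occurs in the first half $w_1$), but this is harmless: we only compare the final product with $w_0$, and the generator count is correct by construction.
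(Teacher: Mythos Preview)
Your approach is essentially the same as the paper's: both reduce $ht(C)=-1$ to $ht(C)=1$ by the mirror symmetry of the Amida diagram, both derive from the hypothesis $w_2C^{m-1}=w_0'$ the explicit values $w_2(q_j)=2m-p_j$, $w_2(p_{j-1})=2m-q_j$, $w_2(q_m)=1$ (the paper's equations (4.2)--(4.4)), and both then evaluate the displayed word on every letter via the cycle structure of ${\cal E}(C)$ to obtain $j\mapsto 2m-j$. Your cycle-insertion description of ${\cal E}(C)$ and the $m$-fold shift along it is a tidier packaging of the same computation the paper carries out one factor of ${\cal E}(C)$ at a time, and your length count $1+(m-1)+2m^2=m(2m+1)$ making the expression automatically reduced is implicit in the paper as well.

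The one substantive point where the paper does more than you is this: after showing the displayed word equals $w_0$, the paper verifies that it can actually be written as ${\cal E}(C)^{(2m+1)/2}_{w_2'}$, i.e.\ that ${\cal E}(C)$ admits an expression $w_1'w_2'$ with $\ell(w_1')=\ell(w_2')=m$ and $w_2'$ equal to the leading block (for instance $w_2'=s_0w_2$ in case~(1), which requires knowing $s_1$ does not occur in $w_1$; this follows from $w_2(q_m)=1$). You explicitly set this aside as ``harmless,'' and you are right that the lemma \emph{as stated} does not demand it. But the paper's extra step is what makes the lemma serve as the inductive step for the subsequent theorem (that every admissible $C$ affords the longest via some $C^{h/2}_{w_2}$): at the next stage one must again invoke the hypothesis ``$C^{h/2}$ is the longest'' for ${\cal E}(C)$, and that hypothesis is precisely the existence of such a splitting. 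So if you intend to use the lemma downstream, you will need to add that verification; for the lemma in isolation your argument suffices.
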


\begin{proof}[{\sc Proof}]
We prove the theorem by induction on $m$.
If $m=2$, then there are two Coxeter elements
$s_2s_1$ and $s_1s_2$ in $\mathfrak{S}_{2\cdot 2 -1}$.
Both of them are by definition admissible
and $s_1(s_2s_1)^1 = s_2(s_1s_2)^1$ is
the longest in $\mathfrak{S}_{2\cdot 2 -1}$.
So both $s_2s_1$ and $s_1s_2$ satisfy the hypothesis.
Consider the case $C=s_2s_1\in\mathfrak{S}_{2\cdot 2 -1}$.
Since $ht(s_2s_1) = 1$ we have only to consider the case (1)(2)(3).
If ${\cal E}(C) = s_0s_3C = s_0s_3s_2s_1$,
then we can check that $s_0(s_1)(s_0s_3s_2s_1)^2$
affords the longest in $\mathfrak{S}_{2\cdot 2+1}$ by direct calculation.
Similarly, if ${\cal E}(C) = Cs_0s_3 = s_2s_1s_0s_3$,
then $(s_1)s_0(s_2s_1s_0s_3)^2$ affords the longest
and if ${\cal E}(C) = s_0Cs_3 = s_0s_2s_1s_3$,
then $(s_1)s_3(s_0s_2s_1s_3)^2$ affords the longest.
The case $C=s_1s_2$ will be verified similarly.

Before moving on to the case $m\geq 3$,
we rewrite the hypothesis.
Let $C = (p_1, p_2, \ldots, p_s, q_1, q_2, \ldots, q_t)$
be the cyclic presentation of the Coxeter element $C$ as in
Remark~\ref{rem:cyclic}(3).
If $ht(C) = -1$,
then taking the mirror image of the Amida diagram of $C$
with respect to the vertical axis,
we can attribute this case to the case $ht(C) = 1$.
Hence we have only to consider the case (1)(2)(3).
In case $ht(C)=1$ the numbers of stanzas and co-stanzas of $C$
are $m-1$ and $m$ respectively.
Hence we put $s = m$ and $t=m+1$.
We note that $p_1 = 1$ and $q_1 = 2m -1$.
The hypothesis that the word $C^{h/2}$ is the longest
implies that $C^{h/2}(j) = 2m-j$.
Hence we have
\begin{eqnarray}
2m-p_j &=& C^{h/2}(p_j)
= s_{i_{m}}s_{i_{m+1}}\cdots s_{i_{2m-2}}C^{m-1}(p_j)\nonumber\\
&=& s_{i_{m}}s_{i_{m+1}}\cdots s_{i_{2m-2}}C^j(p_{m-1})\nonumber\\
&=& s_{i_{m}}s_{i_{m+1}}\cdots s_{i_{2m-2}}C^{j-1}(q_1)\nonumber\\
&=& s_{i_{m}}s_{i_{m+1}}\cdots s_{i_{2m-2}}(q_j)\ (j= 1, 2, \ldots, m-1)
\label{eq:Cpj}
\end{eqnarray}
and
\begin{eqnarray}
2m-q_j &=& C^{h/2}(q_j)
= s_{i_{m}}s_{i_{m+1}}\cdots s_{i_{2m-2}}C^{m-1}(q_j)\nonumber\\
&=& s_{i_{m}}s_{i_{m+1}}\cdots s_{i_{2m-2}}C^{j-1}(q_{m})\nonumber\\
&=& s_{i_{m}}s_{i_{m+1}}\cdots s_{i_{2m-2}}C^{j-2}(p_1)\nonumber\\
&=& s_{i_{m}}s_{i_{m+1}}\cdots s_{i_{2m-2}}(p_{j-1})\ (j= 2, 3, \ldots, m).
\label{eq:Cqj}
\end{eqnarray}
For $q_{1}=2m-1$, we have
\begin{eqnarray}\label{eq:Cq1}
p_1 = 1 &=& 2m-q_{1}\nonumber\\
&=& C^{h/2}(q_{1})
\quad (\mbox{$C^{h/2}$ is the longest in $\mathfrak{S}_{2m-1}$})\nonumber\\
&=& s_{i_{m}}s_{i_{m+1}}\cdots s_{i_{2m-2}}C^{m-1}(q_{1})\nonumber\\
&=& s_{i_{m}}s_{i_{m+1}}\cdots s_{i_{2m-2}}(q_{m})
\end{eqnarray}

Now we prove the case (1), ${\cal E}(C) = s_0s_{2m-1}C$.
In this case for $p_j$ ($j=1,2,\ldots, m-1$) we have
\begin{eqnarray*}
\lefteqn{s_0(s_{i_{m}}s_{i_{m+1}}\cdots s_{i_{2m-2}})
	{\cal E}(C)^{m}(p_j)}\\
&=&	s_0(s_{i_{m}}s_{i_{m+1}}\cdots s_{i_{2m-2}})
	(s_0s_{2m-1}C)^{m}(p_j)\\
&=&	s_0(s_{i_{m}}s_{i_{m+1}}\cdots s_{i_{2m-2}})
	(s_0s_{2m-1}C)^{m-1}(s_0s_{2m-1}C)(p_j)\\
&=&	s_0(s_{i_{m}}s_{i_{m+1}}\cdots s_{i_{2m-2}})
	(s_0s_{2m-1}C)^{m-1}(s_0s_{2m-1})(p_{j+1})\\
&=&	s_0(s_{i_{m}}s_{i_{m+1}}\cdots s_{i_{2m-2}})
	(s_0s_{2m-1}C)^{m-1}(p_{j+1})\\
&=&	\cdots\\
&=&	s_0(s_{i_{m}}s_{i_{m+1}}\cdots s_{i_{2m-2}})
	(s_0s_{2m-1}C)^{j+1}(p_{m-1})\\
&=&	s_0(s_{i_{m}}s_{i_{m+1}}\cdots s_{i_{2m-2}})
	(s_0s_{2m-1}C)^{j}(s_0s_{2m-1}C)(p_{m-1})\\
&=&	s_0(s_{i_{m}}s_{i_{m+1}}\cdots s_{i_{2m-2}})
	(s_0s_{2m-1}C)^{j}(s_0s_{2m-1})(q_1)\\
&=&	s_0(s_{i_{m}}s_{i_{m+1}}\cdots s_{i_{2m-2}})
	(s_0s_{2m-1}C)^{j}s_0(2m)\quad (\because q_1 = 2m-1)\\
&=&	s_0(s_{i_{m}}s_{i_{m+1}}\cdots s_{i_{2m-2}})
	(s_0s_{2m-1}C)^{j-1}(s_0s_{2m-1}C)(2m)\\
&=&	s_0(s_{i_{m}}s_{i_{m+1}}\cdots s_{i_{2m-2}})
	(s_0s_{2m-1}C)^{j-1}(s_0s_{2m-1})(2m)\\
&=&	s_0(s_{i_{m}}s_{i_{m+1}}\cdots s_{i_{2m-2}})
	(s_0s_{2m-1}C)^{j-1}s_0(q_1)\quad (\because q_1 = 2m-1)\\
&=&	s_0(s_{i_{m}}s_{i_{m+1}}\cdots s_{i_{2m-2}})
	(s_0s_{2m-1}C)^{j-1}(q_1)\\
&=&	\cdots\\
&=&	s_0(s_{i_{m}}s_{i_{m+1}}\cdots s_{i_{2m-2}})
	(q_j)\\
&=&	s_0(2m-p_j)\quad (\because \eqref{eq:Cpj})\\
&=&	2m-p_j\quad (\because p_j<2m-1)
\end{eqnarray*}
and for $q_j$ ($j = 2, 3, \ldots, m$) we have
\begin{eqnarray*}
\lefteqn{s_0(s_{i_{m}}s_{i_{m+1}}\cdots s_{i_{2m-2}})
	{\cal E}(C)^{m}(q_j)}\\
&=&	s_0(s_{i_{m}}s_{i_{m+1}}\cdots s_{i_{2m-2}})
	(s_0s_{2m-1}C)^{m}(q_j)\\
&=&	s_0(s_{i_{m}}s_{i_{m+1}}\cdots s_{i_{2m-2}})
	(s_0s_{2m-1}C)^{m-1}(q_{j+1})\\
&=&	s_0(s_{i_{m}}s_{i_{m+1}}\cdots s_{i_{2m-2}})
	(s_0s_{2m-1}C)^{j}(q_{m})\\
&=&	s_0(s_{i_{m}}s_{i_{m+1}}\cdots s_{i_{2m-2}})
	(s_0s_{2m-1}C)^{j-1}(s_0s_{2m+1})(p_1)\\
&=&	s_0(s_{i_{m}}s_{i_{m+1}}\cdots s_{i_{2m-2}})
	(s_0s_{2m-1}C)^{j-1}(0)\quad (\because p_1 = 1)\\
&=&	s_0(s_{i_{m}}s_{i_{m+1}}\cdots s_{i_{2m-2}})
	(s_0s_{2m-1}C)^{j-2}(p_1)\quad (\because p_1 = 1)\\
&=&	s_0(s_{i_{m}}s_{i_{m+1}}\cdots s_{i_n})(p_{j-1})\\
&=&	s_0(2m-q_j)\quad (\because \eqref{eq:Cqj})\\
&=&	2m-q_j\quad (\because q_j<q_1 = 2m-1).
\end{eqnarray*}
For $q_1 = 2m-1$ we have
\begin{eqnarray*}
\lefteqn{s_0(s_{i_{m}}s_{i_{m+1}}\cdots s_{i_{2m-2}})
	{\cal E}(C)^{m}(q_1)}\\
&=&	s_0(s_{i_{m}}s_{i_{m+1}}\cdots s_{i_{2m-2}})
	(s_0s_{2m-1}C)^{m}(q_1)\\
&=&	s_0(s_{i_{m}}s_{i_{m+1}}\cdots s_{i_{2m-2}})
	(s_0s_{2m-1}C)(q_{m})\\
&=&	s_0(s_{i_{m}}s_{i_{m+1}}\cdots s_{i_{2m-2}})
	(s_0s_{2m-1})(p_1)\\
&=&	s_0(s_{i_{m}}s_{i_{m+1}}\cdots s_{i_{2m-2}})
	(0)\quad (\because p_1 = 1)\\
&=&	1.
\end{eqnarray*}
Further we have
\begin{eqnarray*}
\lefteqn{s_0(s_{i_{m}}s_{i_{m+1}}\cdots s_{i_{2m-2}})
	{\cal E}(C)^{m}(0)}\\
&=&	s_0(s_{i_{m}}s_{i_{m+1}}\cdots s_{i_{2m-2}})
	(s_0s_{2m-1}C)^{m}(0)\\
&=&	s_0(s_{i_{m}}s_{i_{m+1}}\cdots s_{i_{2m-2}})
	(s_0s_{2m-1}C)^{m-1}(p_1)\quad (\because p_1 = 1)\\
&=&	s_0(s_{i_{m}}s_{i_{m+1}}\cdots s_{i_{2m-2}})
	(s_0s_{2m-1}C)(p_{m-1})\\
&=&	s_0(s_{i_{m}}s_{i_{m+1}}\cdots s_{i_{2m-2}})
	(s_0s_{2m-1})(q_1)\\
&=&	s_0(s_{i_{m}}s_{i_{m+1}}\cdots s_{i_{2m-2}})
	(2m)\quad (\because q_1 = 2m-1)\\
&=&	2m\ .
\end{eqnarray*}
Finally we have
$s_0(s_{i_{m}}s_{i_{m+1}}\cdots s_{i_{2m-2}}){\cal E}(C)^{m}(2m) = 0$,
since $s_i$ ($i=0,1,\ldots, 2m$) is bijection.
Thus, we found that $s_0(s_{i_{m}}\cdots s_{i_{2m-2}}){\cal E}(C)^{m}$
is the longest.
In order to continue the induction,
we further have to show that
\[
	{\cal E}(C)^{m+1/2}
 = s_0(s_{i_{m}}\cdots s_{i_{2m-2}}){\cal E}(C)^{m}.
\]
In other words,
we have to show that ${\cal E}(C)$
has an expression ${\cal E}(C) = w_1w_2$
such that $w_2 = s_0(s_{i_{m}}\cdots s_{i_{2m-2}})$
and $\ell(w_1) = \ell(w_2) = m$.
Since we already know that ${\cal E}(C)$ has an expression
$s_0s_{2m-1}(s_{i_1}\cdots s_{i_{m-1}})(s_{i_{m}}\cdots s_{i_{2m-2}})$,
we have only to show that
\begin{eqnarray}\label{eq:moves0}
\lefteqn{s_0s_{2m-1}(s_{i_1}\cdots s_{i_{m-1}})
	(s_{i_{m}}\cdots s_{i_{2m-2}})
}\nonumber\\
&=&
s_{2m-1}(s_{i_1}\cdots s_{i_{m-1}})
s_0(s_{i_{m}}\cdots s_{i_{2m-2}}).
\end{eqnarray}
By the equation~\eqref{eq:Cq1} we find that
$s_{i_{m}}\cdots s_{i_{2m-2}}$ involves $s_1$.
This means $s_{2m-1}(s_{i_1}\cdots s_{i_{m-1}})$ does not involve $s_1$.
Hence we can move the $s_0$
in the left hand side of the equation~\eqref{eq:moves0}
rightward and we have the right hand side.

Next we prove the case (2), ${\cal E}(C) = Cs_0s_{2m-1}$.
Similarly to the case (1),
we can check the following by direct calculation.
For $p_j$ ($j = 2, 3, \ldots, m-1$) we have
\begin{equation*}
(s_{i_{m}}s_{i_{m+1}}\cdots s_{i_{2m-2}})s_0{\cal E}(C)^{m}(p_j)
=	2m-p_j
\end{equation*}
and
\begin{equation*}
(s_{i_{m}}s_{i_{m+1}}\cdots s_{i_{2m-2}})s_0{\cal E}(C)^{m}(p_1)\\
=	2m-p_1\ .
\end{equation*}
For $q_j$ ($j = 3, 4, \ldots, m$) we have
\begin{equation*}
(s_{i_{m}}s_{i_{m+1}}\cdots s_{i_{2m-2}})s_0{\cal E}(C)^{m}(q_j)
=	2m-q_j\ ,
\end{equation*}
\begin{equation*}
(s_{i_{m}}s_{i_{m+1}}\cdots s_{i_{2m-2}})s_0{\cal E}(C)^{m}(q_2)\\
=	2m-q_2
\end{equation*}
and
\begin{equation*}
(s_{i_{m}}s_{i_{m+1}}\cdots s_{i_{2m-2}})s_0{\cal E}(C)^{m}(q_1)\\
=	2m-q_1\ .
\end{equation*}
Further we have
\begin{equation*}
(s_{i_{m}}s_{i_{m+1}}\cdots s_{i_{2m-2}})s_0{\cal E}(C)^{m}(0)\\
=	2m
\end{equation*}
and
\begin{equation*}
(s_{i_{m}}s_{i_{m+1}}\cdots s_{i_{2m-2}})s_0
	({\cal E}(C))^{m}(2m)\\
= 0\ .
\end{equation*}
Thus, we found that $(s_{i_{m}}\cdots s_{i_{2m-2}})s_0{\cal E}(C)^{m}$
is the longest.
In order to continue the induction,
we have to check that ${\cal E}(C)$
can be written as ${\cal E}(C) = w_1w_2$
such that $w_2 = (s_{i_m}s_{i_{m+1}}\cdots s_{i_{2m-2}})s_0$
and $\ell(w_1) = \ell(w_2) = m$.
Since ${\cal E}(C) = Cs_0s_{2m-1}
= (s_{i_1}\cdots s_{i_{m-1}})(s_{i_m}\cdots s_{i_{2m-2}})s_0s_{2m-1}$,
we have to show that
\begin{eqnarray}\label{eq:moves2}
\lefteqn{(s_{i_1}\cdots s_{i_{m-1}})
	(s_{i_{m}}\cdots s_{i_{2m-2}})s_0s_{2m-1}
}\nonumber\\
&=&
(s_{i_1}\cdots s_{i_{m-1}})s_{2m-1}
(s_{i_{m}}\cdots s_{i_{2m-2}})s_0.
\end{eqnarray}
By the equation~\eqref{eq:Cpj},
$s_{i_m}\cdots s_{i_{2m-2}}$
maps $q_1 =2m-1$ to $2m-p_1 = 2m-1$.
This implies $s_{i_m}\cdots s_{i_{2m-2}}$
does not involve $s_{2m-2}$.
Hence we can move $s_{2m-1}$
in the left hand side of the equation~\eqref{eq:moves2} leftward.
Thus we obtain the right hand side of the equation.

The case (3), ${\cal E}(C) = s_0Cs_{2m-1}$
is also verified by direct calculation.
In this case $(s_{i_{m}}\cdots s_{i_{2m-2}})s_{2m-1}{\cal E}(C)^m$
is already desired form.
So we can continue the induction.

Thus we have completed the proof of the lemma.
\end{proof}
Finally, we can obtain the following theorem.
\begin{thm}
Let $C$ be a Coxeter element in $\mathfrak{S}_{2m-1}$.
If $C$ is admissible, then there exists an expression $w_1w_2$ of $C$
such that $C^{h/2}_{w_2} = w_2C^{m-1}$ affords the longest.
Conversely, if $C^{h/2}_{w_2} = w_2C^{m-1}$ is the longest
in $\mathfrak{S}_{2m-1}$ for an expression $w_1w_2$ of $C$,
then $C$ is admissible.
\end{thm}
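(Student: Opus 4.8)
The plan is to establish the two implications separately, each by induction on $m$, the converse being the substantial one.

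\emph{Admissible $\Rightarrow$ affords the longest.} This is an immediate induction using the preceding Lemma, whose proof also contains the base case $m=2$. For the step, an admissible Coxeter element $C\in\mathfrak{S}_{2m+1}$ is, by Definition~\ref{defn:admissible}, of the form ${\cal E}(C')$ with $C'\in\mathfrak{S}_{2m-1}$ admissible and ${\cal E}$ an admissible extension; by the induction hypothesis $C'$ affords the longest for a splitting (unique, by the uniqueness lemma), and since the six cases of the preceding Lemma exhaust the admissible extensions of Remark~\ref{rem:admissible}, that Lemma produces an explicit splitting $C=w_1w_2$ with $w_2C^{m}$ the longest in $\mathfrak{S}_{2m+1}$.

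\emph{Affords the longest $\Rightarrow$ admissible.} I would first trade the hypothesis for a clean numerical one. If $C=w_1w_2$ with $\ell(w_1)=\ell(w_2)=m-1$ and $w_2C^{m-1}=w_0$, then from $C^{2m-1}=1$ we get $w_2=w_0C^{-(m-1)}=w_0C^{m}$; since $w_0$ is the longest element, $\ell(w_0\tau)=\ell(w_0)-\ell(\tau)$ for every $\tau$, so $\ell(w_2)=m-1$ is equivalent to $\ell(C^{m})=\ell(w_0)-(m-1)=(m-1)(2m-2)=(m-1)\ell(C)$. Conversely this identity makes $\ell(w_1)=\ell(C^{m}w_0)=\ell(w_0)-\ell(C^{m})=m-1$ automatic, so $C=w_1w_2$ is then a reduced splitting into halves and $w_2C^{m-1}=w_0C^{m}C^{m-1}=w_0$. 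Hence
\[
	C\text{ affords the longest for some }w_2
	\quad\Longleftrightarrow\quad
	\ell(C^{m-1})=(m-1)\,\ell(C),
\]
i.e.\ iff the standard Amida diagram of $C$ stays reduced after stacking $m-1$ copies. It then remains to prove, by induction on $m$, that this reducedness forces $C$ admissible; the base $m=2$ is trivial since both Coxeter elements of $\mathfrak{S}_3$ are admissible.

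For the inductive step, write $C={\cal E}(C')$ for the unique inner Coxeter element $C'\in\mathfrak{S}_{2m-3}=\langle s_2,\dots,s_{2m-3}\rangle$ and the unique extension type, as in the Remark following the definition of extensions. The crux — and the main obstacle — is a length-refined version of the preceding Lemma: by analysing the stacked Amida diagram, tracking how the two outer segments (for $s_1$ at the left wall and $s_{2m-2}$ at the right wall) interact with $C'$ in the four possible configurations after raising to the $(m-1)$-st power, one shows that $\ell(C^{m-1})=(m-1)\ell(C)$ holds \emph{if and only if} ${\cal E}$ is an admissible extension of $C'$ (equivalently, by Lemma~\ref{lem:admissible}, $|ht(C)|=1$) and $\ell((C')^{m-2})=(m-2)\ell(C')$. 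The ``if'' direction is essentially the preceding Lemma; the two new points are that each non-admissible extension type always creates cancellable segments, and that the identity for $C$ descends to $C'$. Granting this, the hypothesis rules out the non-admissible types and forces $\ell((C')^{m-2})=(m-2)\ell(C')$, so $C'$ is admissible by the induction hypothesis and therefore, by Definition~\ref{defn:admissible}, so is $C={\cal E}(C')$. For the actual book-keeping I would, as in the proof of the preceding Lemma, use the cyclic presentation $C=(1=p_1,\dots,p_s,\,2m-1=q_1,\dots,q_t)$ together with relations obtained exactly as in \eqref{eq:Cpj}, \eqref{eq:Cqj} and \eqref{eq:Cq1} to locate $s_1$ and $s_{2m-2}$ inside $w_1$ and $w_2$; this is where the case distinction among the four extension forms is made and the non-admissible ones discarded, and I expect this — rather than any conceptual difficulty — to be the bulk of the work.
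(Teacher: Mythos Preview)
Your forward direction matches the paper exactly.

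For the converse, your length reformulation is correct and clean: the paper never states the equivalence ``$C$ affords the longest $\Longleftrightarrow \ell(C^{m-1})=2(m-1)^2$'' explicitly, but uses it implicitly throughout (note that your derivation actually gives $\ell(C^{m})=2(m-1)^2$; you silently use $C^{m}=C^{-(m-1)}$ to pass to $C^{m-1}$). Your point~(b), that the length identity descends to $C'$, is exactly what the paper proves in its final step via the crude bound $\ell({\cal E}(C)^{m})\le \ell(C^{m-1})+4m-2$.

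Where you and the paper genuinely diverge is point~(a), the claim that a non-admissible extension forces cancellation. You leave this as ``analysing the stacked Amida diagram'' and ``book-keeping''; the paper does not attempt any such direct diagram argument. Instead it argues in two moves. First, for any $C$ with $ht(C)=3$ (so $s=m-2$, $t=m+1$ in the cyclic form), it reads off from $C^{m-1}$ that $q_3\mapsto p_1=1$ and $q_1=2m-1\mapsto q_m$; hence any candidate $w_2$ would have to send $1$ to $2m-q_3\ge 3$ (forcing a subword $\cdots s_2\cdots s_1\cdots$) and simultaneously send $q_m\ge 3$ to $1$ (forcing $\cdots s_1\cdots s_2\cdots$), impossible in a product of distinct generators. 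Second, to turn this into the length statement $\ell(C^{m-1})<2(m-1)^2$, it uses a neat trick: the unique $w',w''$ with $C^{m-1}w'=w''C^{m-1}=w_0$ satisfy $w'w''=C$, so if $\ell(w')=\ell(w'')=m-1$ one would get a valid $w_2=w''$, contradicting the first move.

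So your outline is sound, but the substance of~(a) is precisely where the work lies, and the mechanism is not a local Amida cancellation but the global cycle-structure obstruction above. If you try to carry out the book-keeping you sketch using \eqref{eq:Cpj}--\eqref{eq:Cq1}, beware that those relations are derived \emph{assuming} $ht(C)=1$; to discard the $|ht(C)|=3$ case you need the analogous computation with $s=m-2$, $t=m+1$, which is what the paper actually does.
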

\begin{proof}[{\sc Proof}]
By Remark~\ref{rem:admissible} and the previous lemma,
we find that the first statement of the theorem holds.
In order to prove the second statement
we have to show that
if $C\in\mathfrak{S}_{2m-1}$ is non-admissible Coxeter element
then $C^{h/2} = C^{h/2}_{w_2}$ in \eqref{eq:hpCox}
is not the longest word in $\mathfrak{S}_{2m-1}$
for any choice of $w_2$.

First we show that if $ht(C) = \pm 3$
then $C^{h/2}= C^{h/2}_{w_2}$ can not be the longest.
Suppose that $ht(C) = 3$.
By Remark~\ref{rem:cyclic}, the numbers of stanzas and co-stanzas of $C$
are $m-2$ and $m+1$ respectively,
and $C = (p_1, p_2, \ldots, p_{m-2}, q_1, q_2, \ldots, q_{m+1})$.
Hence $C^{m-1}$ becomes (in two-line form) as follows.
\[
	\begin{pmatrix}
		p_1&p_2&\cdots&p_{m-2}&q_1&q_2&q_3&\cdots&q_{m+1}\\
		q_2&q_3&\cdots&q_{m-1}&q_m&q_{m+1}&p_1&\cdots&p_{m-1}
	\end{pmatrix}\ .
\]
Note that $q_3$ goes to $p_1=1$ and $q_1= 2m-1$ goes to $q_m$.
Under the condition that
$w_2$ consists of $m-1$ distinct generators
from $\{s_1, \ldots, s_{2m-1}\}$,
in order that $w_2C^{m-1}$ is the longest,
$w_2$ has to map $1 = p_1 = C^{m-1}(q_3)$ to $2m - q_3$ and
$q_m = C^{m-1}(q_1) = C^{m-1}(2m-1)$ to 1.
Since $1=p_1<q_{m+1}<q_m$ and $2m-1= q_1>q_2>q_3$,
we have $q_m \geq 3$ and $2m - q_3 \geq 3$.
Hence $w_2$ has to satisfy $w_2(1)\geq 3$ and $w_2(q_m) = 1$($q_m\geq 3$).
The former implies $w_2$ involves a sequence $v_0s_2v_1s_1v_2$
and the latter implies it involves a sequence $v_0's_1v_1's_2v_2'$.
Here $v_i$s and $v_i'$s are sequences of the generators
which have neither $s_1$ nor $s_2$ and which may be empty.
This contradicts the assumption that $w_2$ is a distinct product
of the generators.
Hence if $ht(C) = 3$, then $C^{h/2} = C^{h/2}_{w_2}$ does not afford
the longest element for any $w_2$.
Similarly if $ht(C) = -3$, then $C^{h/2}$ does not afford
the longest either.

Next we show that
if $ht(C) =\pm 3$ then $\ell(C^{m-1})<2(m-1)^2$.
Suppose that $C$ is a Coxeter element in $\mathfrak{S}_{2m-1}$
and $ht(C)$ is 3 or $-3$.
For $C^{m-1}\in\mathfrak{S}_{2m-1}$
there uniquely exist $w'$ and $w''$ such that
$w''C^{m-1}$ and $C^{m-1}w'$ are the longest~\cite{H}.
Here we note that $\ell(w') = \ell(w'')\geq m-1$.
Hence for the $w'$ and $w''$
we have $C^{m-1}w'w''C^{m-1} = 1$.
On the other hand, we already have $C^h = C^{m-1}CC^{m-1} = 1$.
Hence we have $C = w'w''$.
If $\ell(w') = \ell(w'') = m-1$,
then $w'$ and $w''$ must be distinct products of the $m-1$ generators.
This contradicts the previous argument.
Hence we have $\ell(w'') > m-1$.
Since the length of the longest word in $\mathfrak{S}_{2m-1}$
is $(m-1)(2m-1)$, we have $\ell(C^{m-1})<2(m-1)^2$.

Finally we show that if $C$ is non-admissible
then $\ell(C^{m-1})<2(m-1)^2$.
By the previous argument if $ht(C) = \pm 3$,
then the claim holds.
By the definition of non-admissible,
every non-admissible Coxeter element is obtained
from iterative extensions of a Coxeter
element of height $\pm 3$.
So we have only to show the following:
\begin{quote}
For a Coxeter element $C\in\mathfrak{S}_{2m-1}$,
if $\ell(C^{m-1})<2(m-1)^2$,
then $\ell({\cal E}(C)^m) <2m^2$.
\end{quote}
Imagine the Amida diagrams of $C^{m-1}$
and ${\cal E}(C)^m$.
The latter diagram is obtained from the former
by adding $m$ horizontal segments which correspond to $s_0$
to the left and adding another $m$ horizontal segments
which correspond to $s_{2m-1}$ to the right
and $2m-2$ horizontal segments which correspond to $C$
to the above.
Hence we have
$\ell({\cal E}(C)^m) \leq \ell(C^{m-1}) + 4m - 2<2m^2$
and we completes the proof of the theorem.
\end{proof}


\end{document}